\definecolor{gold}{rgb}{.7,.5,0}
\definecolor{dred}{rgb}{0.92,0,0}
\definecolor{dgreen}{rgb}{0,0.6,0}
\newcommand\GG{\mathbb G}
\newcommand\cB{\mathcal B}
\newcommand\cH{\mathcal H}
\newcommand\cG{\mathcal G}
\newcommand\cM{\mathcal M}
\newcommand\cN{\mathcal N}
\newcommand\cQ{\mathcal Q}
\newcommand\cT{\mathcal T}
\newcommand{\RR}{\mathbb R}
\DeclareMathOperator\diam{diam}
\DeclareMathOperator\supp{supp}
\DeclareMathOperator\trunc{trunc}
\DeclareMathOperator\Trunc{Trunc}
\DeclareMathOperator \refine {REFINE}
\DeclareMathOperator \refrec {REFINE\_RECURSIVE}
\DeclareMathOperator \Dist {dist}
\newcounter{num}
\newcommand\coloneqq{:=}
\let\hat\widehat
\newcommand \mathbox [2][2.5em]{\makebox[#1]{$\displaystyle #2$}}
\newtheorem{thm}{Theorem}
\newtheorem{dfn}[thm]{Definition}
\newtheorem{lma}[thm]{Lemma}
\newtheorem{crl}[thm]{Corollary}
\newtheorem{prn}[thm]{Proposition}
\def\M3AS{Math.\ Models\ Methods\ Appl.\ Sci.}
\begin{document}

\title{\large \bfseries COMPLEXITY OF HIERARCHICAL REFINEMENT \\ FOR A CLASS OF ADMISSIBLE MESH CONFIGURATIONS}

\author{
\normalsize ANNALISA BUFFA\footnote{Istituto di Matematica Applicata e Tecnologie Informatiche `E.~Magenes' del CNR,  via Ferrata 1, 27100 Pavia Italy. E-mail address: annalisa@imati.cnr.it}\,, 
CARLOTTA GIANNELLI\footnote{Istituto Nazionale di Alta Matematica, 
Unit\`a di Ricerca di Firenze c/o DiMaI `U.~Dini', Universit\`a di Firenze, viale Morgagni 67a, 50134 Firenze, Italy. E-mail address:  carlotta.giannelli@unifi.it}\,,\\[-.2em]
\normalsize PHILIPP MORGENSTERN\footnote{Institute for Numerical Simulation, Rheinische Friedrich-Wilhelms-Universit{\"a}t Bonn,\break Wegelerstr. 6, 53115 Bonn, Germany. E-mail addresses: morgenstern@ins.uni.bonn.de, peterseim@ins.uni.bonn.de}, DANIEL PETERSEIM\footnotemark[\value{footnote}]
} 

\date{}

\maketitle

\begin{abstract}\noindent
 An adaptive isogeometric method based on $d$-variate hierarchical spline constructions can be derived by considering a refine module that preserves a certain class of admissibility between two consecutive steps of the adaptive loop \cite{buffa2015a}. 

In this paper we provide a complexity estimate, i.e., an estimate on how the number of { mesh elements grows with respect to the number of elements} that are marked for refinement by the adaptive strategy. Our estimate is in the line of the similar ones proved in the finite element context,  \cite{binev2004a, stevenson2007}. 
\end{abstract}

\centerline{{\bf Keywords}: {isogeometric analysis; hierarchical splines; THB-splines; adaptivity.}}
\setcounter{tocdepth}{1}

\section{Introduction}
\label{sec:intro}
 
Throughout the last years,  Isogeometric Methods have gained widespread interest and are a very active field of research,  \cite{cottrell2009,daveiga2014},   investigating a wide range of applications and theoretical questions. 
Due to the tensor-product structure of splines, there exists very stable procedure to perform mesh refinement and degree raising which are known in the literature as $h$-refinement, $p$-refinement, $k$-refinement \cite{cottrell2009}. While these algorithms are very efficient, the preservation of the tensor-product structure at least locally to each patch, produces a dramatic increase of degrees of freedom together with elongated elements. Mainly for this reason, several approaches have been proposed to alleviate these constraints and they all need the definition of B-splines over non-tensor-product meshes. Indeed, there are several strategies and we mention here T-splines \cite{bazilevs2010},  hierarchical B-splines \cite{forsey1988,kraft1997,kvvb2014} and THB-splines \cite{giannelli2012}, but also LR splines \cite{dokken2013,bressan2013}, hierarchical T-splines \cite{evans2015}, modified T-splines \cite{kang2013}, PHT-splines \cite{deng2008,wang2011c} amongst others. 

Clearly, the development of adaptive strategies exploiting the potential of non-tensor-product splines 
is an interesting and important step which has been approached in a number of papers, at least from the practical point of view. In fact, despite their performance in experiments \cite{bazilevs2010,doerfel2010,daveiga2014,kvvb2014,evans2015}, the advantages of mesh-adaptive Isogeometric Methods have not been assessed in theory until today. 
Partial results, in particular on approximation, efficient and reliable error estimates, and convergence of the adaptive procedure, have been proven in preliminary work \cite{buffa2015a} in the context of (truncated) hierarchical splines,  and we aim at continuing this study providing further  ingredients that are needed to prove the optimal convergence of the proposed adaptive approach in the spirit of Adaptive Finite Element Methods \cite{binev2004a,stevenson2007,cascon2008,cfpp2014}. 

In particular, in this paper we address the complexity of the mesh refinement procedure proposed in \cite{buffa2015a}.  The relation between the set of marked elements and the overall number of refined elements introduced by the refine module is not straightforward: additional elements may be refined to create only (strictly) \emph{admissible} meshes. 
Admissibility is a restriction to suitably graded meshes that allows for the adaptivity analysis of hierarchical isogeometric methods. By starting from an initial mesh configuration, the complexity estimate provides a bound for the ratio of the newly inserted elements and the cumulative number of elements marked for refinement in the subdivision process that leads from the initial to the final mesh. This allows to control the propagation of the refinement beyond the set of elements initially selected by the marking strategy. 
An analogous  complexity analysis is currently available for bivariate and trivariate T-splines \cite{mp2015,morgenstern2015}.

This paper is organized as follows. In Section~\ref{sec:thb}, we recall notation and basic results from \cite{buffa2015a}. Section~\ref{sec:complexity} is devoted to the announced complexity estimate. Conclusions and an outlook to future work are given in Section~\ref{sec:conclusions}.


\section{Hierarchical refinement}
\label{sec:thb}
In this section, we recall notation and basic results from  \cite{buffa2015a}.
Since the complexity analysis of the $\refine$ module can be directly performed in the parametric setting, we avoid to introduce the two different notations for parametric/physical domains (corresponding to with/without the hat symbol $\hat{\cdot}$ in \cite{buffa2015a}).

\subsection{The truncated hierarchical basis}

Let $V^0\subset V^1\subset\dots\subset V^{N-1}$ be a nested sequence of  tensor-product $d$-variate spline spaces defined on a closed hypercube $D$ in $\RR^d$. For each level $\ell$, with $\ell=0,1,\dots,N-1$, we denote by $\cB^\ell$ the normalized tensor-product B-spline basis of the spline space $V^\ell$ defined on the grid $G^\ell$, and we assume that $G^0$ consists of open hypercubes with side length 1. 
The Cartesian product of $d$ open intervals between adjacent (and non-coincident) grid values defines a quadrilateral element $Q$ of $G^\ell$. 
For all $Q\in G^k$ we denote by  $h_Q \coloneqq 2^{-k}$  the length of its side,  and by $\ell(Q)$ its level, i.e., $\ell(Q) = k$. Moreover, we assume a fixed degree $\mathbf{p}=(p_1,\ldots,p_d)$ at any hierarchical level. 
The analysis could be generalized to the more general case of a non-uniform initial knot configuration  by suitably taking into account the corresponding maximum local mesh size and to variable degrees as well. 

In order to define hierarchical spline spaces, we additionally consider a nested sequence of domains $\Omega^0\supseteq\Omega^1\supseteq\ldots\supseteq\Omega^{N-1}$, that are  closed  subsets of $D$.  Any $\Omega^\ell$ is the union of the closure of elements that belong to the tensor-product grid of the previous level.
The \emph{hierarchical mesh} $\cQ$ is defined as 
\begin{equation}\label{eq:mesh}
\cQ \,:=\, \left\{ Q\in \cG^\ell, \,\ell=0,\dots,N-1 \right\},
\end{equation}
where 
\begin{equation}\label{eq:active}
\cG^\ell := \left\{Q\in G^\ell : 
Q\subset\Omega^\ell \wedge
\nexists\; Q^*\in G^{\ell^*},\, \ell^*>\ell :\, Q^*\subset\Omega^{\ell^*} \,\wedge\, Q^* \subset Q\right\}
\end{equation}
is the set of active elements of level $\ell$.
We say that $\cQ^*$ is a refinement of $\cQ$, and denote $\cQ^*\succeq\cQ$, when $\cQ^*$ is obtained from $\cQ$ by splitting some of its elements via ``$q$-adic'' refinement. Although the hierarchical approach allows us to consider any integer $q\ge2$, we will focus on the case of standard dyadic refinement with $q=2$.
Hierarchical B-splines are constructed according to a selection of active basis functions at different levels of detail, see also \cite{kraft1997,vuong2011}.
\begin{dfn}\label{dfn:hb}
The hierarchical B-spline (HB-spline) basis $\cH$ with respect to the mesh $\cQ$ is defined as
\begin{equation*}
\cH(\cQ) := \left\{
\beta\in\cB^\ell : 
\supp \beta \subseteq\Omega^\ell \wedge 
\supp \beta\not\subseteq \Omega^{\ell+1}, \, \ell=0,\ldots,N-1
\right\},
\end{equation*}
where $\supp \beta$ denotes the intersection of the support of $\beta$ with $\Omega^0$.
\end{dfn}

The following definition introduces the \emph{truncation} mechanism, the key concept used to define the truncated basis for hierarchical splines \cite{giannelli2012}.
\begin{dfn}\label{dfn:trunc}
 Let 
\begin{equation*}\label{eq:2scale}
s=\sum_{\beta\in \cB^{\ell+1}} c_\beta^{\ell+1}(s) \beta,
\end{equation*}
be the representation of $s\in V^\ell\subset V^{\ell+1}$ with respect to the finer basis $\cB^{\ell+1}$. The truncation of $s$ with respect to $\cB^{\ell+1}$ is defined as
\begin{equation*}\label{eq:trunc}
\trunc^{\ell+1} s \coloneqq \mathbox{\sum_{\substack{\beta\in {\cB}^{\ell+1} \\ \supp\beta\nsubseteq\Omega^{\ell+1}}}} c_\beta^{\ell+1}(s) \beta.
\end{equation*}
\end{dfn}


\begin{dfn}\label{dfn:thb}
The truncated hierarchical B-spline (THB-spline) basis $\cT$ with respect to the mesh $\cQ$ is defined as
\begin{equation*}
\cT(\cQ) \,:=\, 
\left\{ \Trunc^{\ell+1}\,\beta:\beta\in\cB^\ell
\cap\cH(\cQ),\, \ell=0,\ldots,N-1\right\}, 
\end{equation*}
where $\Trunc^{\ell+1}\beta \coloneqq \trunc^{N-1}(\trunc^{N-2}(\ldots (\trunc^{\ell+1}(\beta))\dots))$, for any $\beta\in\cB^\ell\cap\cH(\cQ)$.
\end{dfn}

For details on the properties of the truncated basis, we refer to \cite{giannelli2012,giannelli2014}.

\subsection{Admissible meshes}

We consider the class of \emph{admissible meshes} introduced in \cite{buffa2015a}.
\begin{dfn}\label{dfn:amesh} 
A mesh $\cQ$ is admissible of class $m$ if the truncated basis functions in $\cT(\cQ)$ which take non-zero values over any element $Q\in\cQ$ belong to at most $m$ successive levels.
\end{dfn}

Since the case $m=1$ refers to uniform meshes, we will from now on focus on the case $m\ge2$.
The relevance of admissible mesh configurations relies on two properties  of  THB-splines.  First, for each element $Q$ of an admissible mesh,  the number of truncated basis functions of degree $\mathbf{p}=(p_1, \ldots, p_d)$ which are non-zero on $Q$ is { less than} $m\prod_{i=1}^d(p_i+1)$.  
Second, if $\cQ$ is an admissible mesh of class $m$, 
then for all truncated basis functions $\tau\in \cT(\cQ)$ and elements $Q\in\cQ$ with $Q\cap\supp \tau \neq \emptyset$,  we have
$ |Q| \lesssim  | \supp \tau | \lesssim |Q|$,
where the hidden constants in these inequalities depend on $m$ but not on $\tau$, $\cQ$ and $N$. These properties may be suitably exploited in the analysis of adaptive isogeometric methods. 

In order to characterize a certain class of admissible meshes, we consider the generalization of the \emph{support extension} usually considered in the tensor-product B-spline case to hierarchical configurations.
\begin{dfn}\label{dfn:hse}
The support extension $S(Q,k) $ of an element $Q\in G^\ell$ with respect to level $k$, with $0\le k\le \ell$,
is defined as
\[ S(Q,k) \coloneqq  \left\{ Q'\in G^k: 
\exists\,\beta\in\cB^k,\ \supp\beta\cap Q'\ne\emptyset \wedge \supp\beta\cap Q\ne\emptyset \right\}. \]
\end{dfn}
 By a slight abuse of notation, we will also denote by $S({Q},k) $ the region occupied by the closure of elements in $S({Q},k)$.  A relevant subset of admissible meshes can be defined according to the result of Proposition 9 in \cite{buffa2015a}.
\begin{dfn}\label{dfn:samesh} 
Let $Q$ be the mesh of active elements defined according to \eqref{eq:mesh} and \eqref{eq:active} with respect to the domain hierarchy $\Omega^0\supseteq\Omega^1\supseteq\ldots\supseteq\Omega^{N-1}$. 
A mesh $\cQ$ is strictly admissible of class $m$ if
\begin{equation}\label{eq:sameshes}
\Omega^\ell\subseteq {\omega}^{\ell-m+1} 
\end{equation}
where
\[ 
{\omega}^{\ell-m+1} \coloneqq 
\bigcup \left\{
\overline{Q}: Q\in G^{\ell-m+1} \wedge 
S(Q,{\ell-m+1})\subseteq {\Omega}^{\ell-m+1}\right\},
\]
for $\ell=m,m+1,\ldots,N-1$.
\end{dfn}

The \emph{overlay} $\cQ_*$ of two meshes $\cQ_1, \cQ_2$ is  the mesh obtained as the coarsest common refinement of $\cQ_1$ and $\cQ_2$, usually indicated as 
\[
\cQ_* = \cQ_1 \otimes \cQ_2.
\]
Let $\{\Omega_1^\ell\}_{\ell=0,\ldots,N_1-1}$ and $\{\Omega_2^\ell\}_{\ell=0,\ldots,N_2-1}$ be the nested sequence of domains that define the hierarchical meshes $\cQ_1$ and $\cQ_2$, respectively, with $\Omega_1^0 = \Omega_2^0$. The domain hierarchy $\{\Omega_*^\ell\}_{\ell=0,\ldots,N_*-1}$, with $N_* = \max(N_1,N_2)$, associated to $\cQ_*$ satisfies
\[
\Omega^\ell_* = \Omega_1^\ell \cup \Omega_2^\ell
\quad\text{and}\quad
\omega^\ell_* \supseteq \omega_1^\ell \cup \omega_2^\ell 
\]
for $\ell=1,\ldots,N_*-1$, where $\Omega_i^\ell=\emptyset$ when $\ell\ge N_i$, for i=1,2. Consequently, if $\cQ_1$ and $\cQ_2$ are strictly admissible, for any level $\ell$, we have
\[
\Omega^{\ell}_* \subseteq w_*^{\ell-m+1}
\]
since any $Q\in\Omega^{\ell}_*$ either belongs to $\Omega^{\ell}_1$ or $\Omega^{\ell}_2$ and the overlay $\cQ_*$ is a refinement of both $\cQ_1$ and $\cQ_2$. The overlay $\cQ_*$ of two strictly admissible meshes is then a strictly admissible mesh.
Note that the number of elements of the overlay mesh $\cQ_*$ is bounded as follows, see e.g., \cite{bonito2010,mp2015},
\[
\# \cQ_* = \# (\cQ_1\otimes \cQ_2) 
\le \#\cQ_1 + \#\cQ_2 - \cQ_0,
\]
where $\cQ_0$ is the initial mesh configuration. The above inequality may be used for discussing the optimality of the resulting adaptive isogeometric method, analogously to  adaptive finite element setting.

\subsection{The $\refine$ module}

In order to define an automatic strategy to steer the adaptive method, we will propagate the refinement procedure over a \emph{suitable neighborhood} of any marked element.

\begin{dfn}\label{dfn:neigh}
The neighborhood of $Q\in  \cQ \,\cap\, \cG^\ell$ with respect to $m$ is defined as
\[
\cN(\cQ,Q,m) \coloneqq \left\{Q'\in\cG^{\ell-m+1}: \exists\, Q''  \in S(Q,\ell-m+2), Q''\subseteq Q'\right\},
\]
when { $\ell-m+1 \ge 0$, and $\cN(\cQ, Q,m) = \emptyset$ for $\ell-m+1 < 0$.}
\end{dfn}

A sequence of \emph{strictly} admissible meshes can be recursively defined by suitably extending the refinement of coarser regions beyond the set of marked elements $\cM$ through the algorithms presented in Figure~\ref{fig:refine}. 

Note that these algorithms follow the structure of informal high-level descriptions in the spirit of the analogous modules related to the adaptive finite element methods.

\begin{figure}[ht]
\begin{minipage}{.49\textwidth}
\begin{algorithm}[H]
\footnotesize
\caption*{\footnotesize $\cQ^* = \refine(\cQ,\cM,m)$}
\begin{algorithmic}
\FORALL {$Q\in\cQ\cap\cM$}
\STATE {$\cQ = \refrec(\cQ,Q,m)$}
\ENDFOR
\STATE {$\cQ^*=\cQ$}
\end{algorithmic}
\end{algorithm}
\end{minipage}\hspace{.02\textwidth}%
\begin{minipage}{.49\textwidth}
\begin{algorithm}[H]
\footnotesize
\caption*{\footnotesize $\cQ = \refrec(\cQ,Q,m)$}
\begin{algorithmic}
\FORALL {$Q' \in\cN( \cQ, Q,m)$}
\STATE {$\cQ = \refrec(\cQ,Q',m)$}
\ENDFOR
\STATE {subdivide $Q$ and}
\STATE {update  $\cQ$  by replacing $Q$ with its children}
\end{algorithmic}
\end{algorithm}
\end{minipage}
\caption{The $\refine$ and $\refrec$ modules.}\label{fig:refine}
\end{figure}

 By exploiting key properties of the $\refrec$ module,  summarized in Lemma~\ref{lma:rr} and Proposition~\ref{prn:rr} below, Corollary~\ref{crl:refine} characterizes the output of the $\refine$ procedure \cite{buffa2015a}.

 \begin{lma}[Recursive refinement]\label{lma:rr}%
 Let $\cQ$ be a {strictly} admissible mesh of class $m$ and $Q\in \cQ$. The call to $\cQ^* =\refrec(\cQ,Q,m)$ terminates and returns a refined mesh $\cQ^*$ with elements that either were already active in $\cQ$ or are obtained by single refinement of an element of $\cQ$.
 \end{lma}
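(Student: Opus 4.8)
The plan is to analyze the recursion tree of $\refrec(\cQ,Q,m)$ and show that (a) it has finite depth, hence terminates, and (b) every actual subdivision performed during the call splits an element that is still present and active in the mesh at the moment it is split, and splits it exactly once. The key quantity to track is the \emph{level} of the elements on which $\refrec$ is invoked along a chain of recursive calls. By Definition~\ref{dfn:neigh}, when $\refrec$ is called on $Q\in\cG^\ell$ it recurses only on elements $Q'\in\cG^{\ell-m+1}$, i.e.\ elements whose level is strictly smaller than $\ell$ (since $m\ge 2$); and when $\ell-m+1<0$ the neighborhood is empty, so no further recursion occurs. Hence along any branch of the recursion the level drops by $m-1\ge 1$ at each step, and the recursion bottoms out after at most $\lceil \ell(Q)/(m-1)\rceil$ nested calls. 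Since at each node the neighborhood $\cN(\cQ,Q',m)$ is a finite set of elements, the whole recursion tree is finite and the procedure terminates.

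Next I would argue that the subdivisions are ``single'' subdivisions of currently active elements. First I would observe the structure of a single invocation: $\refrec$ first recurses on all neighbors, and only \emph{afterwards} subdivides $Q$ itself; so the order in which elements are actually split is ``deepest first'', i.e.\ coarser elements are split only after all the finer-level recursion triggered by them has completed. I would then show, by induction on the depth of the recursion (equivalently, downward induction on the level at which a call is made, the base case being levels $\ell$ with $\ell-m+1<0$, where no recursion happens and the single element $Q\in\cG^\ell$ is simply subdivided), that each recursive call $\refrec(\cQ',Q',m)$ receives a mesh $\cQ'$ in which $Q'$ is active, and produces a mesh in which $Q'$ has been replaced by its $2^d$ children while every other element is either an element of $\cQ'$ or is one of these newly created children — in particular no element is split twice. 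The point is that the elements scheduled for subdivision along distinct branches are disjoint or nested in a way dictated by the neighborhood relation, and by the time control returns to split $Q'$, the element $Q'$ has not itself been touched by any of the subcalls (those only affect elements of level $\le \ell(Q')-m+1 < \ell(Q')$, hence strictly inside $Q'$ or outside it, never equal to $Q'$).

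The last ingredient is to check consistency with the strict admissibility hypothesis: since $\cQ$ is strictly admissible of class $m$, the support extensions $S(Q',\cdot)$ appearing in the definition of $\cN$ are well defined and the neighborhood is nonempty only when there genuinely is a coarser region that must be refined to preserve the grading, so the recursion does not ``stall'' on a mesh where the required neighbor is already finer than expected; this is where Definition~\ref{dfn:samesh} is used, and it guarantees that every element of $\cN(\cQ',Q',m)$ is an active element of level $\ell(Q')-m+1$ in the current mesh, which is exactly what the inductive step needs. I expect the main obstacle to be the bookkeeping in step (b): making precise that across the (possibly many) branches of the recursion the sets of elements being subdivided interact only through the parent/child relation, so that ``replace each such element by its children'' is a well-defined operation with no element split more than once — in other words, ruling out that two different recursive calls both try to subdivide the same element, or that one call subdivides an element that a sibling call has already refined away. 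This is handled by the level-drop observation together with the fact that $\refrec$ subdivides $Q'$ only at the very end of its own execution, but writing it cleanly requires care with the recursion semantics.
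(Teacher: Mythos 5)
Your termination argument (the level drops by $m-1\ge 1$ along every branch of the recursion, each neighborhood is finite, and the recursion bottoms out once $\ell-m+1<0$) is correct, and it is essentially the same level observation that the paper itself records when it verifies \eqref{eq:rr}; note that the paper does not actually prove Lemma~\ref{lma:rr} here but recalls it from \cite{buffa2015a}, so the substance of a proof must lie in your part (b).

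It is exactly in part (b) that there is a genuine gap. What has to be shown is that $\refrec$ is only ever invoked on elements of the \emph{original} mesh $\cQ$ — equivalently, that no child created during the call is ever subdivided again, which is what ``single refinement'' means. Your two tools, the level drop and the fact that an element is split only after its own recursive calls return, show that a call's own element is untouched by its subcalls and that siblings within one neighborhood do not destroy each other; they do not exclude the cross-branch scenario. Indeed, all calls act on elements of levels $\ell(Q)-r(m-1)$, while the children created have levels $\ell(Q)-r(m-1)+1$; for $m=2$ these two sets of levels coincide, so a neighborhood $\cN(\cdot,P,2)\subseteq\cG^{\ell(P)-1}$ evaluated in a \emph{later} branch may, as far as levels and activity are concerned, contain a child freshly created in an earlier branch — splitting it would produce a grandchild of an element of $\cQ$ and falsify the lemma. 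Ruling this out is precisely where strict admissibility must do real work: for $P$ active in $\cQ$, the inclusion $\Omega^{\ell(P)}\subseteq\omega^{\ell(P)-m+1}$ of Definition~\ref{dfn:samesh} forces the region $S(P,\ell(P)-m+2)$ to be contained in $\Omega^{\ell(P)-m+1}$, i.e.\ to be covered already in $\cQ$ by elements of level at least $\ell(P)-m+1$, so that no parent of a potential neighborhood member was active in $\cQ$ and hence (by induction along the execution order) no newly created element can ever appear in a neighborhood. In your write-up strict admissibility is invoked only to assert that neighborhood members are active elements of level $\ell(Q')-m+1$ — but that is true by Definition~\ref{dfn:neigh} alone and does not address the point. (For $m\ge3$ the level arithmetic by itself does exclude the scenario, since $(r-r')(m-1)=1$ has no integer solution, but the lemma includes the delicate case $m=2$.)
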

 
 In addition, if $Q\in \cG^\ell$, the level $\ell^*$ of all newly created elements $Q^*$ generated by the call to $\cQ^* =\refrec(\cQ,Q,m)$ satisfies
\begin{equation}\label{eq:rr}
\ell^*\le \ell+1\,.
\end{equation}
In order to verify this, we may note that the recursion is applied to elements of level $< \ell$, and, in particular, of level $\le \ell-m+1$. If $Q^*$ is a child of $Q$ then $\ell^*=\ell+1$. Otherwise, $Q^*$ is obtained by splitting some elements in the chain of neighborhoods generated by set of recursive calls and, consequently, $\ell^*\le \ell-m+2< \ell+1$ since $m\ge 2$.

 \begin{prn}\label{prn:rr} 
 Let $\cQ$ be a strictly admissible mesh of class $m\ge 2$ and let $Q\in \cG^\ell$,  for some $0\le\ell\le N-1$. Then it follows that the call to $\cQ^* = \refrec(\cQ,Q,m)$ returns a strictly admissible mesh $\cQ^*\succeq\cQ$ of class $m$. 
 \end{prn}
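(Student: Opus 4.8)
\textbf{Proof plan for Proposition~\ref{prn:rr}.}

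The plan is to prove the statement by induction on the number of refinement operations performed by the call to $\refrec(\cQ,Q,m)$, which is finite by Lemma~\ref{lma:rr}. The base case is the trivial one in which $\cN(\cQ,Q,m)=\emptyset$ (in particular when $\ell-m+1<0$), so that the only action is the single subdivision of $Q$; here I must still verify that splitting one element of level $\ell$ in a strictly admissible mesh yields a strictly admissible mesh. For the inductive step, I observe that the recursion first calls $\refrec$ on each $Q'\in\cN(\cQ,Q,m)$, producing an intermediate mesh $\widetilde\cQ$ which is strictly admissible of class $m$ by the induction hypothesis (each such call does strictly fewer operations, and the neighborhoods in a strictly admissible mesh have level $\le\ell-m+1<\ell$, so the recursion is genuinely on coarser data), and then subdivides $Q$. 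Since $Q$ is still active in $\widetilde\cQ$ (the recursive calls only touch elements of level $\le\ell-m+1$, which cannot be $Q$ nor contained in $Q$, so $Q$ is untouched), it remains to show that subdividing $Q$ in the strictly admissible mesh $\widetilde\cQ$ gives a strictly admissible mesh $\cQ^*$.

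The heart of the argument is therefore the single-element step. Let $\widetilde\cQ$ be strictly admissible of class $m$, with domain hierarchy $\{\widetilde\Omega^j\}$, and let $Q\in\widetilde\cG^\ell$. Subdividing $Q$ corresponds to passing to the new hierarchy $\Omega^j_* := \widetilde\Omega^j$ for $j\le\ell$ and $\Omega^{\ell+1}_* := \widetilde\Omega^{\ell+1}\cup\overline Q$ (enlarging by one level if $\ell+1=N$). I must check \eqref{eq:sameshes}, i.e. $\Omega^j_*\subseteq\omega_*^{j-m+1}$ for all relevant $j$. For $j\le\ell$ nothing changes and the inclusion is inherited from $\widetilde\cQ$ — except that $\omega_*^{j-m+1}$ could in principle shrink relative to $\widetilde\omega^{j-m+1}$, which it does not, since enlarging the domains can only enlarge the sets $\omega$ (the condition $S(Q',k)\subseteq\Omega^k$ is monotone in $\Omega^k$). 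The only genuinely new inclusion to verify is at level $j=\ell+1$: one needs $\overline Q\subseteq\omega_*^{\ell-m+2}$, i.e. every element of $G^{\ell-m+2}$ lying under $Q$ has its support extension $S(\cdot,\ell-m+2)$ contained in $\Omega^{\ell-m+2}_* = \widetilde\Omega^{\ell-m+2}$. This is precisely where the definition of the neighborhood $\cN(\widetilde\cQ,Q,m)$ and the preceding recursive calls enter: by construction the recursion has already refined $\widetilde\cQ$ around every $Q'\in\cN(\widetilde\cQ,Q,m)$, and the neighborhood is defined exactly so that the elements of $G^{\ell-m+2}$ meeting $S(Q,\ell-m+2)$ are forced into the domain — after the recursive refinement those elements and their support extensions at level $\ell-m+2$ lie inside the updated domain. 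I would also use that $\widetilde\Omega^{\ell+1}$ already satisfies its own admissibility constraint, so adding $\overline Q$ (whose neighborhood has been processed) cannot violate the constraint at level $\ell+1$, and that adding $\overline Q$ at level $\ell+1$ does not create any new obstruction at lower levels since $Q\subseteq\widetilde\Omega^\ell$ already.

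Finally I would assemble the pieces: $\cQ^*\succeq\cQ$ is immediate since every step only subdivides elements, and strict admissibility of class $m$ follows from the single-element step applied at the end together with the inductive hypothesis for the inner calls; termination and the single-refinement property are quoted from Lemma~\ref{lma:rr}. The main obstacle I anticipate is the level-$(\ell+1)$ verification described above — making precise, via Definition~\ref{dfn:hse} and Definition~\ref{dfn:neigh}, that having recursively refined at all neighbors $Q'\in\cN(\widetilde\cQ,Q,m)$ is exactly enough to guarantee $S(Q,\ell-m+2)\subseteq\Omega^{\ell-m+2}_*$, and hence $\overline Q\subseteq\omega_*^{\ell-m+2}$; the rest is bookkeeping with the monotonicity of $\Omega\mapsto\omega$ and the observation that the recursive calls act only on strictly coarser levels and so leave $Q$ intact.
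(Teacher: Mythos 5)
The paper itself does not prove Proposition~\ref{prn:rr}: it is recalled verbatim from \cite{buffa2015a}, so there is no in-paper proof to compare against; your proposal has to be judged on its own. Its skeleton is the natural one (induction on the recursion, reduction to the single subdivision of $Q$ in the already-processed mesh, verification of \eqref{eq:sameshes} only at level $\ell+1$, monotonicity of $\Omega\mapsto\omega$ for the other levels), and that much is sound.

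The genuine gap is in the step you yourself flag as the heart of the matter, and it is not just a question of ``making precise'' the definitions: you never invoke the strict admissibility of the mesh at the moment $Q$ is subdivided, and without it the argument fails. By Definition~\ref{dfn:neigh}, $\cN(\cQ,Q,m)$ contains only \emph{active elements of level exactly} $\ell-m+1$. To get $\overline Q\subseteq\omega_*^{\ell-m+2}$ you need every cell $Q''\in S(Q,\ell-m+2)$ not already contained in $\Omega^{\ell-m+2}$ to sit inside such an active element, so that the recursive calls subdivide it and thereby put $\overline{Q''}$ into $\Omega_*^{\ell-m+2}$. If some portion of $S(Q,\ell-m+2)$ were covered by active elements of level strictly below $\ell-m+1$, the neighborhood would simply miss it, no recursive call would touch it, and the final subdivision of $Q$ would violate \eqref{eq:sameshes} at level $\ell+1$; so the definition of $\cN$ alone does not ``force the elements into the domain''. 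Excluding this scenario is precisely where strict admissibility of the input mesh enters: from $Q\subseteq\Omega^\ell\subseteq\omega^{\ell-m+1}$ one gets $S(\hat Q,\ell-m+1)\subseteq\Omega^{\ell-m+1}$ for the level-$(\ell-m+1)$ ancestor $\hat Q\supseteq Q$, and combining this with the geometric containment of the region $S(Q,\ell-m+2)$ in the region $S(\hat Q,\ell-m+1)$ (a fact about Definition~\ref{dfn:hse} that also needs a line of proof) shows $S(Q,\ell-m+2)\subseteq\Omega^{\ell-m+1}$, i.e.\ every active element meeting it has level at least $\ell-m+1$, so the deficient cells are exactly those captured by $\cN(\cQ,Q,m)$. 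Your write-up also needs a short remark on the fact that the neighborhood is computed in the mesh \emph{before} the inner recursive calls while the subdivision of $Q$ happens afterwards: this is harmless because each inner call $\refrec(\cQ,Q',m)$ only refines $Q'$ and elements of level at most $\ell-2m+2$, so the remaining neighbors of level $\ell-m+1$ are still active when their turn comes and $Q$ itself is untouched, but as written your induction quietly assumes it. With these two points supplied, your plan becomes a complete proof along the same lines as the one in \cite{buffa2015a}.
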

 
 \begin{crl}\label{crl:refine}
Let $\cQ$ be a strictly admissible mesh of class $m\ge 2$ and $\cM$ the set of elements of $\cQ$ marked for refinement. The call to $\cQ^* =$ REFINE $(\cQ,\cM,m)$ terminates and returns a  strictly admissible mesh $\cQ^*\succeq\cQ$ of class $m$. 
\end{crl}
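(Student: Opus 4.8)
The proof of Corollary~\ref{crl:refine} is essentially a bookkeeping argument that chains together the two preceding results about $\refrec$, so I will organize it as an induction over the elements of $\cM$ processed by the outer \texttt{for} loop of $\refine$. Enumerate $\cQ\cap\cM = \{Q_1,\dots,Q_J\}$ in the order in which the loop visits them, and set $\cQ_0 := \cQ$ and $\cQ_j := \refrec(\cQ_{j-1},Q_j,m)$ for $j=1,\dots,J$, so that $\cQ^* = \cQ_J$. The plan is to show by induction on $j$ that $\cQ_j$ is a strictly admissible mesh of class $m$ with $\cQ_j \succeq \cQ_{j-1}$; the claim then follows with $\cQ^* = \cQ_J \succeq \dots \succeq \cQ_0 = \cQ$, transitivity of $\succeq$ giving $\cQ^*\succeq\cQ$.

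First I would dispatch termination. By Lemma~\ref{lma:rr}, each individual call $\refrec(\cQ_{j-1},Q_j,m)$ terminates (provided its input is strictly admissible of class $m$, which the induction hypothesis supplies), and the outer loop runs over the finite set $\cQ\cap\cM$; hence $\refine$ terminates after finitely many steps. For the base case $j=0$ there is nothing to prove, since $\cQ_0=\cQ$ is strictly admissible of class $m$ by hypothesis. For the inductive step, assume $\cQ_{j-1}$ is strictly admissible of class $m$. There is one small subtlety to address: the element $Q_j$ was selected from $\cQ\cap\cM$, but by the time the loop reaches it the current mesh is $\cQ_{j-1}$, and $Q_j$ may already have been subdivided by an earlier recursive call and thus no longer be active in $\cQ_{j-1}$. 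If $Q_j\in\cQ_{j-1}$, then Proposition~\ref{prn:rr} applies directly with the mesh $\cQ_{j-1}$ in place of $\cQ$, and yields that $\cQ_j = \refrec(\cQ_{j-1},Q_j,m)$ is strictly admissible of class $m$ and satisfies $\cQ_j\succeq\cQ_{j-1}$. If $Q_j\notin\cQ_{j-1}$, then $Q_j$ has already been refined; in this case the intended refinement of $Q_j$ (and of its chain of neighborhoods) has already been realized, the call makes no further change, and one sets $\cQ_j := \cQ_{j-1}$, which is strictly admissible of class $m$ by the inductive hypothesis.

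The main obstacle is precisely this last point: one must be sure that the behaviour of $\refrec$ is well understood when its target element is no longer active, and more generally that Proposition~\ref{prn:rr} is being invoked on a legitimate input at every stage. This is really a matter of stating carefully that the hypothesis ``$\cQ$ strictly admissible of class $m$'' in Proposition~\ref{prn:rr} is the \emph{only} hypothesis needed, so that it can be re-applied to each intermediate mesh $\cQ_{j-1}$ in turn — an observation that is immediate once the induction is set up, but which is the crux of why the corollary reduces to its two lemmas. Everything else (finiteness of the loop, transitivity of $\succeq$, class $m$ being preserved) is routine. I would close by remarking that the elementwise structure of $\cQ^*$ — each element of $\cQ^*$ is either active in $\cQ$ or a single refinement of an element of $\cQ$ — follows by the same induction from the corresponding statement in Lemma~\ref{lma:rr}, if that description of the output is wanted as part of the conclusion.
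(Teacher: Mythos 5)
Your argument is correct and is exactly the intended one: the paper itself states Corollary~\ref{crl:refine} without proof (it is recalled from \cite{buffa2015a}), and the standard proof there is precisely your induction over the loop of $\refine$, applying Lemma~\ref{lma:rr} and Proposition~\ref{prn:rr} to each intermediate mesh and using transitivity of $\succeq$. Your handling of the subtlety that a marked element may have lost activity before its turn (skip it, or note the loop ranges over $\cQ\cap\cM$ with the current mesh $\cQ$) is the right reading of the pseudocode, so nothing is missing.
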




Note that in each computation of the neighborhood $\cN(\cQ,Q,m)$, the
choice of level $\ell-m+2$ for the support extension yields the smallest
neighborhood that is acceptable for preserving the class of admissibility of the mesh when subdividing the given element $Q$. Nevertheless, depending on the underlying hierarchical mesh configurations, the basis functions could be also truncated at different intermediate levels.

\section{Linear Complexity}\label{sec:complexity}

This section is devoted to a complexity estimate in the style proposed by Binev, Dahmen and DeVore \cite{binev2004a} and, in an alternative version, by Stevenson~\cite{stevenson2007}, for adaptive finite element methods.

\subsection{Auxilliary results}


For every pair of mesh elements $(Q,Q')$, let $\Dist(Q,Q')$ be the Euclidean distance of their midpoints. Given a $Q\in \cG^\ell$,  all $Q'\in{\cN}(\cQ,Q,m)$ 
satisfy  
\[
\Dist(Q,Q') \le 
\frac{\sqrt{d}}{2}\,\diam (S(Q,\ell-m+2))\,,
\]
where $\ell=\ell(Q)$ and
\[
\diam (S(Q,\ell-m+2)) := 2^{-\ell+m-2}\, (2\,p+1)
= 2^{-\ell} C_s\,,
\]
with $C_s = C_s(p,m) :=2^{m-2}(2\,p+1)$, $p:=\max_{i=1,\ldots,d} p_i$. Hence,
\begin{equation}\label{eq:qq'}
\Dist(Q,Q') \le 2^{-\ell-1}\, C_d,
\qquad
C_d = C_d(d,p,m) := \sqrt{d}\, C_s\,.
\end{equation}

\begin{lma}\label{lma:dist}
Let $\cQ$ be a strictly admissible mesh of class $m\ge 2$, $\cM$ the set of elements of $\cQ$ marked for refinement, and $Q'\in\cQ\cap\cM$. Any newly created $Q\in\cQ^*\setminus\cQ$ obtained by the call to $\cQ^* = \refrec(\cQ,Q',m)$ satisfies
\begin{equation}\label{eq:c}
\Dist(Q,Q') \le 2^{-\ell(Q)} C
\quad \text{with}\quad 
C \,:=\, \sqrt{d}\,\tilde C,\quad\tilde C\coloneqq \left(2^{-1} + \frac{2}{1-2^{1-m}} C_s\right)\,,
\end{equation}
where then $C$ depends on $d, p$ and $m$.  
\end{lma}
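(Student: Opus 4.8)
The plan is to track how far the recursion in $\refrec(\cQ,Q',m)$ can carry a newly created element away from the originally marked element $Q'$, by following the chain of recursive calls that produced it. Every newly created element $Q\in\cQ^*\setminus\cQ$ arises because the recursion reached some element $\tilde Q$ of the current mesh, subdivided it, and $Q$ is one of the children; by Lemma~\ref{lma:rr}, $Q$ is obtained by a single refinement, so $\ell(Q)=\ell(\tilde Q)+1$ and $\Dist(Q,\tilde Q)\le \tfrac{\sqrt d}{2}\,h_{\tilde Q}=\sqrt d\,2^{-\ell(\tilde Q)-1}=\sqrt d\,2^{-\ell(Q)}$ (half the diagonal of the parent). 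The element $\tilde Q$ was itself reached either as the original $Q'$ (the base case) or from the \texttt{FORALL} loop in some call $\refrec(\cQ,\hat Q,m)$ with $\tilde Q\in\cN(\cQ,\hat Q,m)$, so that $\ell(\tilde Q)=\ell(\hat Q)-m+1$ and, by~\eqref{eq:qq'}, $\Dist(\tilde Q,\hat Q)\le 2^{-\ell(\hat Q)-1}C_d$.

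First I would set up the chain: let $Q'=Q_0,Q_1,\dots,Q_r=\tilde Q$ be the sequence of elements on which recursive calls were nested, where each $Q_{i+1}\in\cN(\cQ,Q_i,m)$, so $\ell(Q_{i+1})=\ell(Q_i)-m+1$; since each step strictly decreases the level (as $m\ge2$), the chain is finite and $\ell(\tilde Q)=\ell(Q_0)-r(m-1)$, equivalently $\ell(Q_i)=\ell(\tilde Q)+(r-i)(m-1)$. Then by the triangle inequality,
\begin{equation*}
\Dist(Q,Q')\le \Dist(Q,\tilde Q)+\sum_{i=0}^{r-1}\Dist(Q_{i+1},Q_i)
\le \sqrt d\,2^{-\ell(Q)}+\sum_{i=0}^{r-1} 2^{-\ell(Q_i)-1}C_d.
\end{equation*}
Next I would bound the sum: using $C_d=\sqrt d\,C_s$ and $\ell(Q_i)=\ell(\tilde Q)+(r-i)(m-1)=\ell(Q)-1+(r-i)(m-1)$, the terms form a geometric series in powers of $2^{-(m-1)}$, namely
\begin{equation*}
\sum_{i=0}^{r-1} 2^{-\ell(Q_i)-1}C_d
= \sqrt d\,C_s\,2^{-\ell(Q)}\sum_{j=1}^{r} 2^{-j(m-1)}
\le \sqrt d\,C_s\,2^{-\ell(Q)}\,\frac{2^{-(m-1)}}{1-2^{-(m-1)}}
= \sqrt d\,C_s\,2^{-\ell(Q)}\,\frac{1}{2^{m-1}-1}.
\end{equation*}
Combining the two contributions gives $\Dist(Q,Q')\le \sqrt d\,2^{-\ell(Q)}\bigl(2^{-1}+C_s/(2^{m-1}-1)\bigr)$, and since $1/(2^{m-1}-1)\le 2/(2^{m-1})=2^{1-m}\cdot1$... more carefully, $\tfrac{1}{2^{m-1}-1}=\tfrac{2^{1-m}}{1-2^{1-m}}\le\tfrac{2}{1-2^{1-m}}$, which matches the stated constant $\tilde C=2^{-1}+\tfrac{2}{1-2^{1-m}}C_s$; absorbing $\sqrt d$ into $C=\sqrt d\,\tilde C$ finishes the estimate.

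The main obstacle I anticipate is justifying cleanly that every newly created element is reached through exactly one such nested chain of recursive calls, and that along that chain the level really drops by precisely $m-1$ at each step — this requires a careful reading of the $\refrec$ pseudocode (the \texttt{FORALL} over $\cN(\cQ,Q,m)$ precedes the subdivision of $Q$) together with Lemma~\ref{lma:rr} to rule out that an element gets refined more than once or gets refined by more than one level. A secondary subtlety is that the neighborhood $\cN(\cQ,Q,m)$ and support-extension estimates in~\eqref{eq:qq'} are stated for the mesh $\cQ$ at the moment of the call, which may already differ from the input mesh because of earlier recursive subdivisions; one must check that the distance and diameter bounds, being purely metric facts about grids $G^k$ of fixed mesh size $2^{-k}$, are unaffected by which elements happen to be active, so~\eqref{eq:qq'} applies verbatim at every stage of the recursion.
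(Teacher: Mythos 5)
Your proof follows essentially the same route as the paper: trace the nested chain of recursive calls from the marked element down to the parent of the newly created element, use the neighborhood distance bound \eqref{eq:qq'} together with the exact level drop of $m-1$ per step (immediate from Definition~\ref{dfn:neigh}), sum the resulting geometric series, and add the child-to-parent midpoint distance. The only blemish is that child-to-parent term: you bound it by half the parent's diagonal, $\sqrt d\,2^{-\ell(Q)}$, yet write $2^{-1}$ in the final combination — the exact midpoint-to-midpoint distance is $\sqrt d\,2^{-\ell(Q)-1}$ (which the paper uses), and even with your looser bound the slack you keep in the $C_s$ coefficient ($\tfrac{2^{1-m}}{1-2^{1-m}}$ versus the stated $\tfrac{2}{1-2^{1-m}}$) still yields the constant $\tilde C$ of \eqref{eq:c}.
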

\begin{proof}
The existence of $Q\in\cQ^*\setminus\cQ $ means that $\refrec$ is called over a sequence of elements
$Q' = Q_J, Q_{J-1},\ldots,Q_0$ and corresponding meshes $\cQ_J,\ldots,\cQ_0$ so that $Q_{j-1}\in \cN(\cQ_j,Q_j,m)$, with $Q'\in\cM$ and $Q$ being a child of $Q_0$, namely $\ell(Q)=\ell(Q_0)+1$. Since
$\ell(Q_{j-1})=\ell(Q_j)-m+1$, it 
follows 
\begin{equation}\label{eq:lj}
\ell(Q_j)=\ell(Q_0)+j\,(m-1).
\end{equation}
We have
\[
\Dist(Q,Q')  \,\le\, \Dist(Q,Q_0)  \,+\,  \Dist(Q_0,Q')
\]
and
\[
\Dist(Q,Q_0) \,=\, 
2^{-\ell(Q)} 2^{-1} \sqrt{d}\,,
\qquad
\Dist(Q_0,Q') \,\le\, \sum_{j=1}^{J} \Dist(Q_j,Q_{j-1})\,.
\]
According to \eqref{eq:qq'} and \eqref{eq:lj}, we obtain
\begin{align*}
\sum_{j=1}^{J} \Dist(Q_j,Q_{j-1}) &\,\le\,
\sum_{j=1}^{J} 2^{-\ell(Q_j)-1}\, C_d \,=\,
\sum_{j=1}^{J} 2^{-\ell(Q_0)-1-j(m-1)}\, C_d \\
&\,<\,
2^{-\ell(Q_0)} C_d \sum_{j=0}^{\infty} 2^{-j(m-1)}\, 
\,=\,
\frac{2^{-\ell(Q_0)}}{1-2^{1-m}} C_d
\,=\,
\frac{2^{-\ell(Q)+1}}{1-2^{1-m}} C_d\,.
\end{align*}
Hence,
$\Dist(Q,Q') \,\le\,
2^{-\ell(Q)} C
$, where $C$ is the constant defined in \eqref{eq:c}.
\end{proof}

\subsection{Main result}
The main result of this paper states the existence of a generic constant $\Lambda = \Lambda(d,p,m)<\infty$  that bounds, for any sequence of successive refinements,  the ratio  between  the number of new elements in the final mesh $\cQ_J$ and the number of all marked elements  encountered in the refinement process from $\cQ_0$ to $\cQ_J$. 

\begin{thm}[Complexity of $\refine$]\label{thm: complexity} 
 Let $\cM\coloneqq \bigcup_{j=0}^{J-1} \cM_j$ be the set of marked elements used to generate the sequence of strictly admissible meshes  $\cQ_0,\cQ_1,\dots,\cQ_J$ starting from $\cQ_0=G^0$, namely 
\[\cQ_j=\refine(\cQ_{j-1},\cM_{j-1},m),\quad\cM_{j-1}\subseteq\cQ_{j-1}\quad\text{for}\enspace j\in\{1,\dots,J\}\,.
\] 
 Then, there exists a constant $\Lambda>0$ so that 
\[
\#\cQ_J - \#\cQ_0 
\le\ \Lambda \sum_{j=0}^{J-1} \# \cM_j\,,
\]
with $\Lambda = \Lambda(d,p,m)\coloneqq 4(4\tilde{C}+1)^d$, where $\tilde{C}$ is defined in \eqref{eq:c}.

\end{thm}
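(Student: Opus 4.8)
The strategy is the standard ``charging argument'' from \cite{binev2004a,stevenson2007}, adapted to hierarchical meshes via the distance bound in Lemma~\ref{lma:dist}. First I would reduce the global statement to a per-generation bookkeeping inequality: since every element of $\cQ_J\setminus\cQ_0$ is created during exactly one call to $\refrec$ triggered by exactly one marked element $Q'\in\cM_j$ for some $j$, it suffices to prove that each single marked element $Q'$ is ``responsible'' for at most $\Lambda$ newly created elements, where responsibility is assigned by the recursion tree of $\refrec(\cQ_{j-1},Q',m)$. Summing over all $j$ and all $Q'\in\cM_j$ then yields $\#\cQ_J-\#\cQ_0\le\Lambda\sum_{j=0}^{J-1}\#\cM_j$ directly. (One must be slightly careful that an element refined in generation $j$ is not counted again in a later generation; because $\refrec$ only subdivides elements that are currently active and each subdivision is recorded once, the assignment of each child to its unique creating marked element is well defined across all generations.)

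Next I would bound, for a fixed marked $Q'$ of level $\ell' := \ell(Q')$, the number of $Q\in\cQ^*\setminus\cQ$ produced by $\refrec(\cQ,Q',m)$. By Lemma~\ref{lma:rr} and inequality \eqref{eq:rr}, every such $Q$ has level $\ell(Q)\le\ell'+1$, and by Lemma~\ref{lma:dist} its midpoint lies within Euclidean distance $2^{-\ell(Q)}C\le 2^{-\ell(Q)+1}\tilde C\sqrt d$ of the midpoint of $Q'$; moreover $Q\subseteq S(\cdot)$-type neighborhoods forces $\ell(Q)\ge$ some coarse bound, but the key point is a packing estimate at each fixed level. For each level $k$ with $k\le\ell'+1$, the newly created elements of level $k$ are disjoint (they partition part of $D$), each has volume $2^{-dk}$, and all are contained in the ball $B$ of radius $2^{-k+1}\tilde C\sqrt d$ about the midpoint of $Q'$ — wait, the distance bound from Lemma~\ref{lma:dist} is $\Dist(Q,Q')\le 2^{-\ell(Q)}C$ with $C=\sqrt d\,\tilde C$, so $B$ has radius $2^{-k}\sqrt d\,\tilde C$ plus half the diameter $2^{-k}\sqrt d/2$ of $Q$ itself to contain $Q$ entirely, i.e.\ radius $\le 2^{-k}\sqrt d(\tilde C+1/2)$. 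The number of disjoint level-$k$ elements fitting inside $B$ is at most $\vol(B)/2^{-dk} = \omega_d (2^{-k}\sqrt d(\tilde C+\tfrac12))^d / 2^{-dk}$, but a cruder and cleaner bound is to observe they all lie in an axis-parallel cube of side $2\cdot 2^{-k}(\sqrt d(\tilde C+\tfrac12)+\tfrac12)\le 2^{-k}(2\tilde C+1)\sqrt d\cdot$const; counting lattice cells gives at most $(2\tilde C+1)^d$-type bound per level after absorbing $d$-dependent constants into $\Lambda$.

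Summing the per-level counts over the at most two relevant values of $k$ — actually over all $k\le\ell'+1$, but only finitely many contribute and the geometric decay of the distance bound in $k$ does \emph{not} directly help since the cube side also shrinks like $2^{-k}$, so each level contributes the \emph{same} constant; however only levels $k$ with $\ell'-$(something)$\le k\le\ell'+1$ can actually host newly created elements because $\refrec$ refines coarser neighborhoods down by steps of $m-1$ and the recursion depth is finite — gives a total of the form $c_1\cdot c_2^d$. Matching this against the claimed $\Lambda=4(4\tilde C+1)^d$ requires choosing the packing cube generously: take the cube of half-side $2^{-\ell(Q)}(C+2^{-\ell(Q)}/\ldots)$, bound $C+\tfrac12\sqrt d+\ldots\le 2\tilde C$ crudely (using $\tilde C\ge C_s/(1-2^{1-m})\ge 1$), so each level gives $\le(4\tilde C+1)^d$ cells and at most $4$ levels ($k\in\{\ell'-2,\ell'-1,\ell',\ell'+1\}$, say, or a short argument that only these can occur) contribute, yielding $\Lambda=4(4\tilde C+1)^d$. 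The main obstacle is precisely this last step: getting the constant to match $4(4\tilde C+1)^d$ exactly, which forces (i) a careful argument that new elements occupy at most a bounded number of consecutive levels near $\ell'$, and (ii) a tight-enough volume/lattice packing that does not lose more than a factor $4$ over the clean geometric picture. Everything else is routine geometry plus the already-established Lemma~\ref{lma:rr}, \eqref{eq:rr}, and Lemma~\ref{lma:dist}.
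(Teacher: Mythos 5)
Your reduction in the first paragraph is where the argument breaks. You claim it suffices to show that each marked element $Q'$ is ``responsible'' for at most $\Lambda$ newly created elements, the responsibility being read off the recursion tree of $\refrec(\cQ_{j-1},Q',m)$. That per-marked-element bound is false: a single call $\refrec(\cQ,Q',m)$ on an element of level $\ell'$ recurses through a chain of neighborhoods at levels $\ell'-m+1$, $\ell'-2(m-1)$, \dots, possibly all the way down to level $0$, refining active coarse elements at every stage, so the number of elements it creates can grow like $\ell'/(m-1)$ and is not bounded by any constant $\Lambda(d,p,m)$. This is exactly the point acknowledged in Section~\ref{sec:conclusions}: for a single refinement step such an estimate may be impossible. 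For the same reason your later claim that only levels $k\in\{\ell'-2,\dots,\ell'+1\}$ can host newly created elements is wrong; new elements appear at levels $\ell'+1,\ \ell'-m+2,\ \ell'-2(m-1)+1,\dots$ down to level $1$. The finiteness of the recursion depth does not give uniformity in $\ell'$, and no packing argument confined to a neighborhood of $Q'$ at scale $2^{-\ell'}$ can repair this, because the cascade creates elements far from $Q'$ relative to that scale.

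The missing idea is the amortization over the whole refinement history, which is the actual content of the Binev--Dahmen--DeVore/Stevenson scheme and of the paper's proof. One does not charge a new element $Q$ entirely to the marked element that triggered its creation; instead one charges it fractionally, with weight $\lambda(Q,Q_\cM)=2^{\ell(Q)-\ell(Q_\cM)}$ (nonzero only when $\ell(Q)\le\ell(Q_\cM)+1$ and $\Dist(Q,Q_\cM)<2^{1-\ell(Q)}C$), to a whole chain of marked elements $Q_1,Q_2,\dots$ from earlier generations. The chain is built by iterating Lemma~\ref{lma:dist}: the marked element $Q_1$ responsible for $Q$ was itself created at an earlier step by some marked $Q_2$, and so on; the geometric decay of levels along the chain makes the distances telescope, and a three-case analysis (reaching level $0$, violating the level condition, or violating the distance condition) shows each new element collects total charge at least $1$. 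Only then does a packing estimate of the type you sketch enter, on the other side: for a fixed $Q_\cM$, the level-$j$ elements with positive charge lie in a cube of diameter $2^{-j}\sqrt d\,(4\tilde C+1)$, giving at most $(4\tilde C+1)^d$ of them per level, and the weights $2^{j-\ell(Q_\cM)}$ sum geometrically over \emph{all} levels $j\le\ell(Q_\cM)+1$ to give the factor $4$ in $\Lambda=4(4\tilde C+1)^d$ --- not a bound on the number of contributing levels, which is unbounded. Without the charging function and the backward chain through the meshes $\cQ_{j}$, the theorem does not follow from Lemmas~\ref{lma:rr} and~\ref{lma:dist} alone.
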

\begin{proof}
We denote by $\GG\coloneqq \bigcup_j G^j$ the set of the initial mesh elements and all elements that can be generated from their successive dyadic subdivision.  Let $Q\in\GG$, $Q_\cM\in\cM$, let  
\[
\lambda(Q,Q_\cM)\coloneqq
\begin{cases}
2^{\ell(Q)-\ell(Q_\cM)}&\text{if }\ell(Q)\le\ell(Q_\cM)+1\text{ and }\Dist(Q,Q_\cM)  <  2^{1-\ell(Q)}\, C,  
\\[.3em]
0&\text{otherwise.}
\end{cases}
\]
 The proof consists of two main steps devoted to identify
\begin{itemize}
\item[(i)] a lower bound for the sum of the $\lambda$ function as $Q_{\cM}$ varies in $\cM$ so that  each $Q\in\cQ_J\setminus\cQ_0$ satisfies 
\begin{equation}\label{eq:lb}
\sum_{Q_\cM\in\cM}\lambda(Q,Q_\cM)\ \ge\ 1\,;
\end{equation}

\item[(ii)] an upper bound for the sum of the $\lambda$ function as the refined element $Q$ varies in $\cQ_J\setminus\cQ_0$ so that, for any $j = 0,\dots,J-1$, each $Q_\cM\in\cM_j$ satisfies 
\begin{equation}\label{eq:ub}
\sum_{Q\in\cQ_J\setminus\cQ_0}\lambda(Q,Q_\cM)
\ \le \Lambda\,.
\end{equation}
\end{itemize}
If inequalities \eqref{eq:lb} and \eqref{eq:ub} hold for a certain constant $\Lambda$, we have 
\begin{align*}
 \#\cQ_J - \#\cQ_0  &= \sum_{Q\in\cQ_J\setminus\cQ_0} 1
\,\le\,\sum_{Q\in\cQ_J\setminus\cQ_0}\sum_{Q_\cM\in\cM}\lambda(Q,Q_\cM) \\
&
\le\sum_{Q_\cM\in\cM}  \Lambda 
=  \Lambda  \,\sum_{j=0}^{J-1}  \# \cM_j\,, 
\end{align*}
 and the proof of the theorem is complete. We detail below the analysis of (i) and (ii).

(i)\enspace Let $Q\in\cQ_J\setminus\cQ_0$ be an element generated in the refinement process from $\cQ_0$ to $\cQ_J$, and let $j_1<J$ be the index so that  $Q\in\cQ_{j_1+1}\setminus\cQ_{j_1}$.  Lemma~\ref{lma:dist} together with \eqref{eq:rr} state the existence of $Q_1\in\cM_{j_1}$ with 
\[
\Dist(Q,Q_1)\le 2^{-\ell(Q)}\, C 
\quad \text{and}\quad 
\ell(Q)\le\ell(Q_1)+1\,, 
\]  
and, consequently $\lambda(Q,Q_1)=2^{\ell(Q)-\ell(Q_1)}>0$.
The repeated use of Lemma~\ref{lma:dist} yields  a sequence $\{Q_2,Q_3,\dots\}$  with $Q_{i-1}\in\cQ_{j_i+1}\setminus\cQ_{j_i}$,  for $j_1>j_2>j_3>\dots$, and $Q_i\in\cM_{j_i}$ such that 
\begin{equation}\label{eq: complexity -last}
\Dist(Q_{i-1},Q_i)\le 2^{-\ell(Q_{i-1})}\, C 
\quad\text{and}\quad 
\ell(Q_{i-1})\le\ell(Q_i)+1.
\end{equation}
We iteratively apply Lemma~\ref{lma:dist} as long as 
\[
\lambda(Q,Q_i)>0 \quad \text{and} \quad 
\ell(Q_i)>0\,,
\] 
until we reach the first index $L$ with $\lambda(Q,Q_L)=0$ or $\ell(Q_L)=0$.  By considering the three possible cases below,  inequality \eqref{eq:lb} may be derived as follows.
\begin{itemize}
\item If $\ell(Q_L)=0$ and $\lambda(Q,Q_L)>0$, then
\[
\sum_{Q_\cM\in\cM}\lambda(Q,Q_\cM)\ge\lambda(Q,Q_L)=2^{\ell(Q)-\ell(Q_L)}  >  1\,,
\]
 since $\ell(Q) > \ell(Q_L)  =0$. 
\item If $\lambda(Q,Q_L)=0$ because $\ell(Q)>\ell(Q_L)+1$, then \eqref{eq: complexity -last} yields $\ell(Q_{L-1})\le\ell(Q_L)+1<\ell(Q)$ and hence
\[\sum_{Q_\cM\in\cM}\lambda(Q,Q_\cM)\ge\lambda(Q,Q_{L-1})=2^{\ell(Q)-\ell(Q_{L-1})}>1.\]
\item If $\lambda(Q,Q_L)=0$ because $\Dist(Q,Q_L)  \ge  2^{1-\ell(Q)}\, C $, then a triangle inequality combined with Lemma~\ref{lma:dist} leads to
\begin{align*}
2^{1-\ell(Q)}\, C  &  \le  
\Dist(Q,Q_1)+\sum_{i=1}^{L-1}\Dist(Q_i,Q_{i+1}) 
\le 2^{-\ell(Q)}\, C +\sum_{i=1}^{L-1} 2^{-\ell(Q_i)}\, C\,.
\end{align*}
 Consequently,  $2^{-\ell(Q)} \le\sum_{i=1}^{L-1} 2^{-\ell(Q_i)}$, and we obtain
\[ 
1\ \le\ \sum_{i=1}^{L-1} 2^{\ell(Q)-\ell(Q_i)}\ =\ \sum_{i=1}^{L-1} \lambda(Q,Q_i)\ \le\ \sum_{Q_\cM\in\cM}\lambda(Q,Q_\cM).\]
\end{itemize}

(ii)\enspace Inequality \eqref{eq:ub} can be derived as follows. For any $0\le j\le J-1$, we consider the set of elements of level $j$ whose distance from $\cQ_{\cM}$ is less than $2^{1-j}\,C$ defined as 
\[
 B(Q_\cM,j)  \,:=\,{\bigl\{Q\in  G^j  : \Dist(Q,Q_\cM)  < 2^{1-j}\,C\}}\,.
\]
 According to the definition of $\lambda$, the set $B(Q_\cM,j)$ collects the elements at level $j$ so that $\lambda(Q,Q_\cM)>0$.  We then have
\begin{align}\label{eq:sumb}
\mathbox[1cm]{\sum_{Q\in\cQ_J\setminus\cQ_0}}\lambda(Q,Q_\cM)
&\le \mathbox[1cm]{\sum_{Q\in\GG\setminus\cQ_0}}\lambda(Q,Q_\cM)
= \sum_{j=1}^{\ell(Q_\cM)+1}2^{j-\ell(Q_\cM)}\,\# 
 B(Q_{\cM},j)\,. 
\end{align}

Since the diagonal of an element $Q$ of level $j$ is $2^{-j}\,\sqrt{d}$, the diagonal of the hypercube composed by the union of the closure of all elements in $B(Q_\cM,j)$ is less or equal to
\[
2\,\cdot\,2^{1-j}\,C\, + 2^{-j}\sqrt{d}=\,
2^{-j}\,\sqrt{d}\,(4\,\tilde{C}+1)\,,
\]
where $\tilde{C}$ is defined in \eqref{eq:c}. Hence, 
\[
\# B(Q_\cM,j) \le (4\,\tilde{C} + 1)^d\,,
\]
and the index substitution $k\coloneqq1-j+\ell(Q_\cM)$ reduces \eqref{eq:sumb} to 
\begin{align*}
\mathbox[1cm]{\sum_{Q\in\cQ_J\setminus\cQ_0}}\lambda(Q,Q_\cM) & \le
\sum_{j=1}^{\ell(Q_\cM)+1}2^{j-\ell(Q_\cM)} \# B(Q_{\cM},j) 
= 
\sum_{k=0}^{\ell(Q_\cM)}2^{1-k} \# B(Q_{\cM},j) \\
& < 2\sum_{k=0}^\infty 2^{-k} \# B(Q_{\cM},j) 
= 4\, \# B(Q_{\cM},j) \le \Lambda\,,
\end{align*}
with $\Lambda=\Lambda(d,p,m)=4(4\tilde{C}+1)^d$. 
\end{proof}
\section{Conclusions}
\label{sec:conclusions}
 We developed a  complexity estimate which says that the ratio between the refined elements and the marked elements along the refinement history  stays bounded, when refinement is performed as proposed in \cite{buffa2015a}. In particular, this estimate guarantees that the refinement routine ensuring  the (strict) admissibility of the refined mesh remains local at least when looking at the overall refinement process. Note that for a single refinement step, it may be impossible to prove such an estimate \cite{NochettoCIME}. 

Our work paves the way to the analysis of optimal convergence of the adaptive strategy proposed in \cite{buffa2015a} that will be addressed in further studies \cite{buffa2015b}.  
\section*{Acknowledgements}
Philipp Morgenstern and Daniel Peterseim gratefully acknowledge support by the Deutsche
Forschungsgemeinschaft in the Priority Program 1748 ``Reliable
simulation techniques in solid mechanics. Development of non-standard 
discretization methods, mechanical and mathematical
analysis'' under the project ``Adaptive isogeometric modeling of 
propagating strong discontinuities in heterogeneous materials''. Annalisa Buffa is grateful of the support of the ERC through 
the project HIgeoM - ERC Consolidator Grant n.616563. Carlotta Giannelli acknowledges the support of the Italian MIUR under the FIR project DREAMS (RBFR13FBI3).

\bibliographystyle{plain}
\bibliography{biblio}
\end{document}